\tikzstyle{mybox} = [draw=black, fill=white,  thick,
\tikzstyle{mybox} = [draw=black, fill=white,  thick,
\newtheorem{thm}{Theorem}
\newtheorem{prop}{Proposition}
\theoremstyle{definition}
\newtheorem{remark}{Remark}
\begin{document}

\title{On the Equivalence of SDP Feasibility and a Convex Hull Relaxation for System of Quadratic Equations}
\author{Bahman Kalantari \\
Department of Computer Science, Rutgers University, NJ\\
kalantari@cs.rutgers.edu
}
\date{}
\maketitle

\begin{abstract}
We show {\it semidefinite programming} (SDP) feasibility problem is equivalent to solving a {\it convex hull relaxation} (CHR) for a finite system of quadratic equations.  On the one hand, this  offers a simple description of SDP.  On the other hand, this equivalence makes it possible to describe a version of the {\it Triangle Algorithm} for SDP feasibility based on solving CHR.  Specifically, the Triangle Algorithm either computes an approximation to the least-norm feasible solution of SDP, or using its {\it distance duality}, provides a separation when no solution within a prescribed norm exists. The worst-case complexity of each iteration  is computing the largest eigenvalue of a symmetric matrix arising in that iteration.  Alternate complexity bounds on the total number of iterations can be derived.   The Triangle Algorithm thus provides an alternative to the existing interior-point algorithms for SDP feasibility and SDP optimization.  In particular, based on a preliminary computational result, we can efficiently solve SDP relaxation of {\it binary quadratic} feasibility via the Triangle Algorithm. This finds application in solving SDP relaxation of MAX-CUT.  We also show in the case of testing the feasibility of a system of convex quadratic inequalities, the problem is reducible to a corresponding CHR, where the worst-case complexity of each iteration via the Triangle Algorithm is solving a {\it trust region subproblem}.  Gaining from these results, we discuss potential extension of CHR and the Triangle Algorithm to solving general system of polynomial equations.

\end{abstract}

{\bf Keywords:} Convex Hull, Quadratic System of Equations, Semidefinite Programming, Approximation Algorithms, Triangle Algorithm, Power Method

\section{Introduction} \label{sec1}

In this article we prove the feasibility problem in {\it semidefinite programming} (SDP) is equivalent to a {\it convex hull relaxation} for  a finite system of quadratic equations. Specifically,  consider a system of quadratic equations $q_i(x)=b_i$, $i=1, \dots, m$, where $b_i \in \mathbb{R}$, and $q_i(x)=x^TA_ix$, $A_i$ an $n \times n$ real symmetric matrix.  The problem of testing if there is a common solution $x$ is well known to be NP-hard.  Rather than testing if the system has a solution,
we test if $b=(b_1, \dots, b_m)^T$ lies in the convex hull of the set of all $m$-tuples $(q_1(x), \dots, q_m(x))^T$, as $x$ ranges in $\mathbb{R}^n$.
We refer to this problem as the {\it convex hull relaxation} (CHR). The convex hull of a set is the smallest convex set containing that set.   By Carath\'eorory theorem, if $b$ lies in this convex hull, it can be represented as a convex combination of at most $m+1$ such $m$-tuples.  This characterization, in an elementary fashion, gives rise to a positive semidefinite matrix $X$, where for $i=1, \dots, m$, $Tr(A_i X)=b_i$. Hence $X$ is a feasible solution of an SDP. Conversely, a feasible solution to the SDP defined by these equations gives rise to a solution in the convex hull of the set of $m$-tuples.

On the one hand this gives a very simple description of SDP. On the other hand, suppose we wish to test if there is a solution $x$ to the system of quadratic equations, where $\Vert x \Vert$ is within a given radius $r$. We show how to solve the convex hull relaxation of this problem via a version of the {\it Triangle Algorithm}, a {\it fully polynomial-time approximation scheme} (FPTAS) designed to  test if a given point lies in an arbitrary given compact convex subset of the Euclidean space, \cite{kalchar, kalsep}.  In summary,  from the results in this article we gain insights on SDP, including the applicability  of the Triangle Algorithm as an alternative algorithm to interior-point algorithms for SDP. The results also offer insights on solving a general system of polynomial equation.  In the remaining of this section we give a brief review on SDP and the Triangle Algorithm.

SDP has received much attention in the literature due to its wide range of applications.  It is a generalization of LP, where the underlying nonnegativity cone is replaced with the cone of symmetric positive semidefinite matrices, see e.g.  \cite{Alizadeh, NN, van96}.  As a special case of {\it self-concordant} optimization problems, SDP can be approximated to within $\varepsilon$ tolerance in polynomial time complexity in terms of the dimensions of the problem and  $\ln 1/\varepsilon$, see \cite{NN}. The main work in each iteration of interior-point algorithms is solving a Newton system arising in that iteration.  The over all complexity in solving an SDP with $n \times n$ matrices can be as large as $O(n^{6.5} \ln 1/\varepsilon )$, see \cite{Nesterov}.  SDP relaxations have found applications in combinatorial optimization, see \cite{GW} for approximation of  the {\it MAX-CUT} problem, \cite{NW} for relaxations of nonconvex quadratic, and \cite{Lovasz} for applications in other  combinatorial problems. In some SDP relaxations the overall complexity can be reduced, e.g. to $O(n^{4.5} \ln 1/\varepsilon )$, \cite{HRVW}.

The {\it Triangle Algorithm} (TA), introduced in \cite{kalchar}, is a geometrically inspired algorithm originally designed to solve the {\it convex hull membership} problem (CHM): Test if a given $p_\circ \in \mathbb{R}^m$ lies in $conv(S)$, where $S=\{v_1, \dots, v_n \} \subset \mathbb{R}^m$.  The algorithm is endowed with {\it distant dualities} and offers fast alternative complexity bounds to polynomial-time algorithms, allowing trade-off between dependence on the dimension of the problem and the desired tolerance in approximation.  In numerical experimentations TA performs quite well.  A generalization of the Triangle Algorithm described in \cite{kalsep} tests if a  given pair of arbitrary compact convex sets $C, C' \subset \mathbb{R}^m$ intersect, or if they are separable. Specifically, it can solve any of the following four problems when applicable: (1)
computing an approximate point of intersection, (2) computing a separating hyperplane, (3) computing an approximation to the optimal pair of supporting hyperplanes, (4) approximating the distance between the sets.  In particular, CHM and the {\it hard margin} problem (SVM) are very special cases. In the general version of the Triangle Algorithm the complexity of each iteration depends on the nature and description of the underlying convex sets.  In this article we are interested in  the version of the algorithm where one of the sets is a singleton point. We refer to {\it General-CHM} as the problem of testing if a given point $p_\circ \in \mathbb{R}^m$ lies in a given compact convex subset $C$ of $\mathbb{R}^m$ which may be given as $conv(S)$, the convex hull of some compact subset $S$.

The organization of the article is as follows.  Section \ref{sec2}, describes the equivalence of SDP feasibility and the convex hull relaxation for a system of quadratic equations. Section \ref{sec3}, describes the Triangle Algorithm and its complexity for the General-CHM. Section \ref{sec4}, specializes the Triangle Algorithm for solving the convex hull relaxation, as well as related problems. We end with concluding remarks.

\section{Equivalence of SDP Feasibility and Convex Hull Relaxation} \label{sec2}

Let $\mathbf{S}=\{A_1, \dots, A_m\}$ be a subset of  $\mathbb{S}^n$,  the set of $n \times n$ real symmetric matrices. Let
$b =(b_1, \dots, b_m)^T \in \mathbb{R}^m$, $b \not = 0$.  The Frobenius inner product of $X,Y \in \mathbb{S}^n$ is denoted by any of the following equivalent notations
\begin{equation}  \label{eq2}
\langle X, Y \rangle_F=Tr(XY)=X \bullet Y = \sum_{i=1}^n \sum_{j=1}^n x_{ij} y_{ij}.
\end{equation}
We write $X \succeq 0$ for $X \in \mathbb{S}_+^n$, the cone of positive semidefinite matrices in $\mathbb{S}^n$. The {\it SDP feasibility} problem is testing if $b \in \mathbf{P}$, where
\begin{equation} \label{eq1}
\mathbf{P}=\{\mathbf{A}(X) \equiv (A_1 \bullet X, \dots, A_m \bullet X )^T: \quad X \succeq 0\}.
\end{equation}
For $i=1, \dots, m$, let $q_i(x)=x^TA_ix$. Let $Q(x)=(q_1(x), \dots, q_m(x))^T$.
Consider testing the solvability of the system of quadratic equations $Q(x)=b$. When each $A_i$ is a diagonal matrix solving this system is equivalent to solving a linear programming feasibility problem (replacing squared variables as nonnegative ones). However, it is NP-hard in general.  Rather than solving this NP-hard problem we consider the {\it convex hull relaxation} (CHR) of the problem defined as testing if $b$ lies in $\mathbf{C}$, where
\begin{equation} \label{eq4}
\mathbf{C}= conv \big (\{Q(x)=(q_1(x), \dots, q_m(x))^T:  x \in \mathbb{R}^n\} \big )=
\big \{ \sum_{i=1}^t \alpha_i Q(x_i): \sum_{i-1}^t \alpha_i =1, \alpha_i \geq 0,  x_i  \in \mathbb{R}^n \big \},
\end{equation}
Given a real number $r$, let
\begin{equation} \label{eq5}
\mathbf{C}(r)= \big \{ \sum_{i=1}^t \alpha_i Q(x_i) \in \mathbf{C}: \Vert x_i \Vert \leq r \big \}, \quad
\mathbf{P}(r)= \big \{ \mathbf{A}(X) \in \mathbf{P}: Tr(X) \leq r^2 \big \}.
\end{equation}

\begin{thm}  \label{thm1} $b \in \mathbf{C}(r)$ if and only if $b \in \mathbf{P}(r)$.  In particular,
$b \in \mathbf{C}$ if and only if $b \in \mathbf{P}$.
\end{thm}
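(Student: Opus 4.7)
The plan is to prove the two implications of the equivalence $b \in \mathbf{C}(r) \Longleftrightarrow b \in \mathbf{P}(r)$ by exhibiting an explicit correspondence between a convex combination representation of $b$ and a PSD matrix $X$ certifying the SDP feasibility, then derive the unrestricted statement ($b \in \mathbf{C} \Longleftrightarrow b \in \mathbf{P}$) by taking a union over $r$.

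For the easy direction, suppose $b = \sum_{i=1}^t \alpha_i Q(x_i)$ with $\alpha_i \geq 0$, $\sum_i \alpha_i = 1$, and $\|x_i\| \leq r$. I would set
\[
X = \sum_{i=1}^t \alpha_i x_i x_i^T.
\]
Each rank-one term $x_i x_i^T$ is positive semidefinite, so $X \succeq 0$. Using $x_i^T A_j x_i = \mathrm{Tr}(A_j x_i x_i^T) = A_j \bullet (x_i x_i^T)$ and the linearity of the Frobenius inner product, one checks $A_j \bullet X = \sum_i \alpha_i q_j(x_i) = b_j$, so $\mathbf{A}(X) = b$. Finally, $\mathrm{Tr}(X) = \sum_i \alpha_i \|x_i\|^2 \leq r^2 \sum_i \alpha_i = r^2$, placing $b$ in $\mathbf{P}(r)$.

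For the harder direction, suppose $b = \mathbf{A}(X)$ with $X \succeq 0$ and $s := \mathrm{Tr}(X) \leq r^2$. If $X = 0$, then $b = 0 = Q(0) \in \mathbf{C}(r)$. Otherwise $s > 0$, and the key move is to diagonalize: write $X = \sum_{i=1}^n \lambda_i u_i u_i^T$ with $\lambda_i \geq 0$ and $\|u_i\| = 1$ via the spectral theorem, and then rescale the eigenvectors so that each appears with the same norm. Specifically, set $x_i = \sqrt{s}\, u_i$ and $\alpha_i = \lambda_i / s$ for indices with $\lambda_i > 0$. Then $\alpha_i \geq 0$, $\sum_i \alpha_i = \mathrm{Tr}(X)/s = 1$, and $\|x_i\|^2 = s \leq r^2$. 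A direct computation gives
\[
\sum_i \alpha_i x_i x_i^T = \sum_i (\lambda_i/s)\, s\, u_i u_i^T = X,
\]
and applying $A_j \bullet (\cdot)$ to both sides as above yields $\sum_i \alpha_i Q(x_i) = \mathbf{A}(X) = b$. Hence $b \in \mathbf{C}(r)$.

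The two directions together give $\mathbf{C}(r) = \mathbf{P}(r)$ for every $r \geq 0$. For the \emph{in particular} claim, note that by the definitions $\mathbf{C} = \bigcup_{r \geq 0} \mathbf{C}(r)$ and $\mathbf{P} = \bigcup_{r \geq 0} \mathbf{P}(r)$ (every finite convex combination uses vectors of bounded norm, and every PSD matrix has finite trace), so the equality extends to $\mathbf{C} = \mathbf{P}$. The only subtle step is the spectral rescaling in the reverse direction, where the trick is choosing $\alpha_i$ and $\|x_i\|$ \emph{uniformly} via the common scale $s = \mathrm{Tr}(X)$ so that the norm bound $\|x_i\| \leq r$ is immediate from $s \leq r^2$; the rest is routine algebra. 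A Carathéodory reduction could afterwards be invoked to bound the number of terms by $m+1$, but it is not needed for the equivalence itself.
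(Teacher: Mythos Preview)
Your proof is correct and follows essentially the same approach as the paper: the forward direction builds $X=\sum_i \alpha_i x_i x_i^T$, and the reverse direction uses the spectral decomposition with the same rescaling $\alpha_i=\lambda_i/\mathrm{Tr}(X)$, $x_i=\sqrt{\mathrm{Tr}(X)}\,u_i$. Your treatment is in fact slightly more careful in explicitly handling the $X=0$ case and in noting that Carath\'eodory is optional for the equivalence.
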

\begin{proof} Assume $b \in \mathbf{C}(r)$. From the definition of $\mathbf{C}(r)$ and Carath\'eodory theorem, for some $t \leq m+1$, there exists $x_1, \dots, x_t \in \mathbb{R}^n$, each $\Vert x_i \Vert \leq r$, such that
\begin{equation} \label{eq6}
\sum_{i=1}^t \alpha_i Q(x_i) = b, \quad \sum_{i=1}^t \alpha_i =1, \quad \alpha_i \geq 0, \quad \forall i.
\end{equation}
Set $X_i= x_i {x_i}^T$, $i=1, \dots, t$. Then $X_i \in \mathbb{S}^n_+$.  Hence
$X = \sum_{i=1}^t \alpha_i X_i \in  \mathbb{S}^n_+$. Moreover,  for each $k=1, \dots, m$,
\begin{equation} \label{eq7}
A_k \bullet X  = A_k \bullet \sum_{i=1}^t \alpha_i X_i= \sum_{i=1}^t \alpha_i A_k \bullet X_i = \sum_{i=1}^t \alpha_i A_k \bullet x_i {x_i}^T= \sum_{i=1}^t \alpha_i q_k(x_i) =  \sum_{i=1}^t \alpha_i b_k=b_k.
\end{equation}
Additionally,
\begin{equation} \label{eqbb}
Tr(X)=\sum_{i=1}^t \alpha_i Tr(x_ix_i^T)
= \sum_{i=1}^t \alpha_i \Vert x_i \Vert^2  \leq \sum_{i=1}^t \alpha_i  r^2 =r^2.
\end{equation}
Hence $b \in \mathbf{P}(r)$. Conversely, suppose $b \in \mathbf{P}(r)$. Then there exists $X \in \mathbb{S}^n_+$ such that $A_k \bullet X=b_k$, $k=1, \dots, m$, $Tr(X) \leq r^2$.
Let the spectral decomposition of $X$ be $X=U \Lambda U^T$, with $\Lambda={\rm diag}(\lambda_1, \dots, \lambda_n)$, $U=[u_1, \dots, u_n]$ corresponding to eigenvalue-eigenvectors, $\Vert u_i \Vert =1$, for all $i$.  We have $Tr(X)=\sum_{i=1}^{n} \lambda_i$. For $i=1, \dots, n$, let  $\alpha_i= \lambda_i/ Tr(X)$, $x_i= \sqrt{Tr(X)} u_i$. Then $\Vert x_i \Vert = \sqrt{Tr(X)}$.  Thus $X$ is a convex combination of $X_i=x_ix_i^T$, $i=1, \dots, n$ since we have:
\begin{equation} \label{eq8a}
X = \sum_{i=1}^n \lambda_i u_i u_i^T = \sum_{i=1}^n \alpha_i x_ix_i^T, \quad
\sum_{i=1}^n \alpha_i = (\sum_{i-1}^n \lambda_i)/Tr(X)=1,  \quad \alpha_i \geq 0, \forall i.
\end{equation}
Thus for $k=1, \dots, m$,
\begin{equation} \label{eq8}
A_k \bullet X = A_k \bullet \sum_{i=1}^n \lambda_i u_i u_i^T = \sum_{i=1}^n \alpha_i A_k  \bullet x_ix_i^T= \sum_{i=1}^n \alpha_i x_i^TA_k x_i = \sum_{i=1}^n \alpha_i q(x_i)= b_k.
\end{equation}
Also, $\Vert x_i \Vert \leq \sqrt{Tr(X)} \leq r$. Hence $b \in \mathbf{C}(r)$.
\end{proof}
\begin{remark} To prove the converse we could have used any Cholesky factorization of $X$ instead of its spectral decomposition. This will be more efficient.
\end{remark}

\begin{remark} Suppose the quadratic system $Q(x)=b$ is inhomogeneous, say for $i=1, \dots, m$, $q_i(x)=x^TA_ix+c_i^Tx+d_i$, where $c_i \in \mathbb{R}^n$, $d_i \in \mathbb{R}$. We can convert $q_i(x)$ to a homogeneous one as follows. First, the constant term $d_i$ is absorbed in $b_i$.  If $c_i \not =0$ for some $i$, we introduce a new variable $z$ and  replace $q_i(x)$ with $q_i(x,z)=x^TA_ix+z c_i^Tx$.  Also we add a new equation $q_{m+1}(x,z)=z^2=b_{m+1}=1$.  Now suppose $(x,z)$ satisfies
\begin{equation} \label{eq11}
Q(x,z) \equiv (q_1(x,z), \dots, q_{m+1}(x,z))^T= (b_1, \dots, b_{m+1})^T.
\end{equation}
If $z=1$, then $Q(x)=b$.  Otherwise, $z=-1$ but then $(-x,-z)$ also satisfies the above equation so that $Q(-x)=b$.
\end{remark}

\begin{remark} The set of $x_i$'s in any convex combination satisfying  $\sum_{i=1}^t  \alpha_i Q(x_i)=b$ induces approximate solutions to $Q(x)=b$. We can improve them by replacing each $x_i$ with $Q(x_i) \not =0$, with $\sqrt{\gamma^*_i} x_i$, where $\gamma^*_i={\rm argmin} \{\Vert \gamma Q(x_i)- b \Vert:  \gamma \in \mathbb{R} \}={b^TQ(x_i)}/{\Vert Q(x_i) \Vert^2}$.
\end{remark}

\section{Triangle Algorithm for General Convex Hull Membership} \label{sec3}
The  {\it general convex hull membership} (General-CHM) refers to the following problem: Given a point $p_\circ \in \mathbb{R}^m$, and a compact subset $S$ of $\mathbb{R}^m$, test if $p_\circ$ lies in $C=conv(S)$, the convex hull of $S$. If $S$ is also convex, then $C=S$.
Given $\varepsilon \in (0,1)$, the Triangle Algorithm either computes $p' \in C$ such that $\Vert p'-p_\circ \Vert \leq \varepsilon$, or  finds a hyperplane that separates $p_\circ$ from $C$. The Triangle Algorithm works as follows:  Given arbitrary {\iterate} $p' \in C$, while $\Vert p' - p_\circ \Vert > \varepsilon$,  tests if there exists a {\it pivot}, i.e. $v \in C$ that satisfies the following equivalent conditions
\begin{equation} \label{eqaa}
\Vert p' - v \Vert \geq \Vert p_\circ-v \Vert, \quad (p_\circ -p')^Tv \geq \frac{1}{2} ( \Vert p_\circ \Vert^2 - \Vert p' \Vert^2).
\end{equation}
If no pivot exists, $p'$ is called a {\it witness} since it provides a hyperplane  $H$ that separates $p_\circ$ from $C$, proving $p_\circ \not \in C$. Specifically, $H$ is the orthogonal bisector of the line segment $p_\circ p'$:
\begin{equation} \label{eq12}
H=\{x: (p'-p_\circ)^Tx = \frac{1}{2}(\Vert p' \Vert^2 -\Vert p_\circ \Vert^2)\}.
\end{equation}

A {\it strict pivot} is a pivot where $\angle p'p_\circ v \geq \pi/2$. Equivalently, $(p'-p_\circ)^T (v-p_\circ) \leq 0$. That is,
\begin{equation} \label{eq13}
(p_\circ - p')^Tv \geq \Vert p_\circ \Vert^2 - p'^Tp_\circ.
\end{equation}

When $p'$ admits a pivot or a strict pivot $v$, we can get closer to $p_\circ$ by replacing the iterate $p'$ with the closest point to $p_\circ$ on the line segment $p'v$. From the definition of strict pivot and since a compact convex set is the
convex hull of its extreme points (see e.g. Krein-Milman theorem) we have

\begin{prop} \label{eq14} Let $c= p_\circ - p'$.  Then $v \in C$ is a strict pivot if and only if
\begin{equation}
\max\{c^Tx: x \in C\}= \max \{c^Tx: x \in S\}  \geq c^Tv \geq \Vert p_\circ \Vert^2 - p_\circ^Tp.
\end{equation}
\end{prop}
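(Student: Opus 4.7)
The proof is essentially a matter of unwinding definitions plus one standard convex analysis observation, so the plan is short and the obstacles are minor.

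First, I would verify the rightmost inequality is nothing but the defining condition of a strict pivot. Substituting $c = p_\circ - p'$ into (\ref{eq13}) gives $c^T v \geq \Vert p_\circ \Vert^2 - p_\circ^T p'$, which is exactly the last inequality in the displayed chain (I am reading the trailing $p$ as a typo for $p'$). So by definition $v \in C$ is a strict pivot if and only if $c^T v \geq \Vert p_\circ \Vert^2 - p_\circ^T p'$, and this already accounts for one direction of the stated ``if and only if''.

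Second, I would justify the middle inequality $\max\{c^T x: x \in C\} \geq c^T v$, which is immediate because $v \in C$. Combining this with the first step gives the forward direction: if $v$ is a strict pivot, then $v$ is a feasible point attaining $c^T v \geq \Vert p_\circ \Vert^2 - p_\circ^T p'$, and the maximum over $C$ is at least $c^T v$. For the reverse, any $v \in C$ with $c^T v \geq \Vert p_\circ \Vert^2 - p_\circ^T p'$ satisfies the strict pivot inequality (\ref{eq13}) by the same substitution.

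Third, and this is the only nontrivial step, I would show $\max\{c^T x: x \in C\} = \max\{c^T x: x \in S\}$. The inequality $\geq$ is trivial since $S \subseteq C$. For $\leq$, any $x \in C = \mathrm{conv}(S)$ may be written as a finite convex combination $x = \sum_i \alpha_i s_i$ with $s_i \in S$, so $c^T x = \sum_i \alpha_i c^T s_i \leq \max_i c^T s_i \leq \sup_{s \in S} c^T s$, and taking supremum on the left gives the claim. Compactness of $S$ guarantees the suprema are attained, so they are maxima. Alternatively, one can invoke the Krein–Milman theorem as hinted in the excerpt: $C$ is compact convex, so the linear functional $c^T x$ attains its maximum at an extreme point of $C$, and every extreme point of $\mathrm{conv}(S)$ for compact $S$ lies in $S$.

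The main obstacle, such as it is, is just the Krein–Milman / convex-combination argument in the last paragraph; the rest is bookkeeping. Assembling the three steps produces the stated chain of (in)equalities and proves equivalence with the strict pivot condition.
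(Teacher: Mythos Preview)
Your proposal is correct and follows exactly the paper's own justification: the paper offers no proof beyond the line preceding the proposition, which says the result follows ``from the definition of strict pivot and since a compact convex set is the convex hull of its extreme points (see e.g.\ Krein--Milman theorem)''---precisely your steps one through three. Your reading of $p$ as a typo for $p'$ is also correct.
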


From Proposition \ref{eq14}, if there is strict pivot we can compute it via a maximization  so that Triangle Algorithm can be described as follows:

\begin{algorithm}[H]
\SetAlgoNoLine
\KwIn{$S \subset \mathbb{R}^m$, $p_\circ \in \mathbb{R}^m$, $\varepsilon \in (0,1)$.}
\normalsize
Initialization: Select arbitrary $p' \in S$.
\vskip 0.01cm
\While{$\Vert p_\circ  - p' \Vert > \varepsilon$}{$v  \gets {\rm argmax} \{(p_\circ - p')^Tx:x \in conv(S)\}$
\vskip 0.01cm
\lIf{$(p_\circ - p)^Tv < \Vert p_\circ \Vert ^2 - p_\circ^T p$}{Stop, $p_\circ \not \in conv(S)$} \Else{ $\alpha \gets (p_\circ-p')^T(v-p') /{\Vert v - p' \Vert^2}$,

$p' \gets (1- \alpha)p'+ \alpha v$}}
\vskip 0.01cm
    \caption{Triangle Algorithm for General-CHM}
\end{algorithm}

The correctness of the Triangle Algorithm is due to the following (see \cite{kalchar, kalsep}):
\begin{thm}  \label{thm2} {{\rm (Distance Duality)}}
$p_\circ \in conv(S)$ if and only if for each  $p' \in conv(S)$ there exists a (strict) pivot $v \in conv(S)$. Equivalently,  $p_\circ \not \in C$ if and only if there exists a witness $p' \in C$. \qed
\end{thm}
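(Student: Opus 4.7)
The plan is to prove the two implications of the distance duality by exhibiting explicit certificates in each case, and then to note that the ``witness'' formulation is just the contrapositive of the pivot-existence formulation. Throughout the argument I will use that $C=\mathrm{conv}(S)$ is compact (the convex hull of a compact subset of $\mathbb{R}^m$ is compact in finite dimensions), so that ``closest point to $p_\circ$ in $C$'' is well defined.

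For the forward direction, assume $p_\circ \in C$ and fix any $p' \in C$. I would simply exhibit $p_\circ$ itself as a pivot: setting $v=p_\circ$ gives $\|p'-v\|=\|p'-p_\circ\|\ge 0=\|p_\circ-v\|$, which is (\ref{eqaa}). To upgrade to a \emph{strict} pivot coming from $S$, I would appeal to Proposition~\ref{eq14}: with $c=p_\circ-p'$, one has
\begin{equation*}
\max\{c^Tx : x \in S\} = \max\{c^Tx : x \in C\} \;\ge\; c^Tp_\circ \;=\; \|p_\circ\|^2 - p_\circ^Tp',
\end{equation*}
so any maximizer $v\in S$ satisfies inequality (\ref{eq13}) and is a strict pivot.

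For the reverse direction I would argue the contrapositive: assuming $p_\circ \notin C$, I would produce a witness $p'\in C$ admitting no pivot. The natural candidate is $p^{*}:=\arg\min\{\|p_\circ-x\|:x\in C\}$, which exists and is unique by compactness and strict convexity of the squared distance. The key tool is the first-order optimality (variational) inequality for the projection onto a convex set:
\begin{equation*}
(v-p^{*})^T(p_\circ-p^{*})\le 0 \qquad \text{for all } v\in C,
\end{equation*}
equivalently $(p_\circ-p^{*})^Tv \le (p_\circ-p^{*})^Tp^{*}$. A direct computation shows
\begin{equation*}
(p_\circ-p^{*})^Tp^{*} - \tfrac{1}{2}\bigl(\|p_\circ\|^2-\|p^{*}\|^2\bigr) = -\tfrac{1}{2}\|p_\circ-p^{*}\|^2 < 0,
\end{equation*}
since $p_\circ\ne p^{*}$. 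Chaining these gives $(p_\circ-p^{*})^Tv < \tfrac{1}{2}(\|p_\circ\|^2-\|p^{*}\|^2)$ for every $v\in C$, which is precisely the negation of the pivot condition (\ref{eqaa}). Hence $p^{*}$ is a witness, and the separating hyperplane is the bisector (\ref{eq12}).

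The main obstacle is the reverse direction: one has to translate the projection's variational inequality, which is phrased in terms of inner products against $p_\circ-p^{*}$, into the particular normalization appearing in (\ref{eqaa}) and (\ref{eq13}). The algebraic identity relating $(p_\circ-p^{*})^Tp^{*}$ and $\tfrac{1}{2}(\|p_\circ\|^2-\|p^{*}\|^2)$ via $-\tfrac{1}{2}\|p_\circ-p^{*}\|^2$ is the pivotal calculation that both guarantees strictness and aligns the two formulations. The compactness of $C=\mathrm{conv}(S)$ is a minor but essential point needed to ensure the projection $p^{*}$ exists.
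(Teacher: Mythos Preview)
The paper does not actually supply a proof of Theorem~\ref{thm2}; it records the statement with a \qed and defers to the references \cite{kalchar, kalsep}. Your argument, by contrast, is a complete and correct self-contained proof: the forward direction is handled cleanly by taking $v=p_\circ$ (or, for a strict pivot in $S$, any maximizer of $c^Tx$ over the compact set $S$, using that $c^Tp_\circ=\|p_\circ\|^2-p_\circ^Tp'$), and the reverse direction via the Euclidean projection $p^{*}$ and its variational inequality is the standard route. Your algebraic identity
\[
(p_\circ-p^{*})^Tp^{*}-\tfrac{1}{2}\bigl(\|p_\circ\|^2-\|p^{*}\|^2\bigr)=-\tfrac{1}{2}\|p_\circ-p^{*}\|^2
\]
is precisely what converts the projection inequality into strict failure of the pivot condition~(\ref{eqaa}) for every $v\in C$, so $p^{*}$ is a witness. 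This is essentially the argument appearing in the cited sources, so there is nothing to correct.
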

The iteration complexity  for Triangle Algorithm is given in the following theorem.

\begin{thm}   \label{thm3}  {\rm (Complexity Bounds)} Let $R= \max \{ \Vert x- p_\circ \Vert:  x \in conv(S)\}$.  Let $\varepsilon \in (0,1)$.

(i): Triangle  in $O(1/\varepsilon^2)$ iterations either computes
$p_\varepsilon \in conv(S)$ with $\Vert p_\circ - p_\varepsilon \Vert \leq R \varepsilon$, or a witness. In particular, if  $p_\circ \not \in conv(S)$ and $\delta_*= \min\{\Vert x-p_\circ \Vert: x \in conv(S)\}$, the number of iterations to compute a witness is
$O(R^2/\delta^2_*)$.  Furthermore, given  any witness $p' \in conv(S)$, we have
\begin{equation} \label{eq15}
\delta_* \leq \Vert p' -  p_\circ \Vert \leq 2 \delta_*,
\end{equation}
i.e. a witness estimates the distance to $conv(S)$ to within a factor of two.

(ii) Suppose a ball of radius $\rho >0$ centered at $p_\circ$ is contained in the relative interior of $conv(S)$. If Triangle Algorithm uses a strict pivot in each iteration, it computes $p_\varepsilon \in conv(S)$ satisfying $\Vert p_\varepsilon - p_\circ \Vert  \leq \varepsilon$ in $O\big ((R/\rho)^{2} \ln {1}/{\varepsilon} \big )$ iterations.
\qed
\end{thm}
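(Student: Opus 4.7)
The plan is to track the squared distance $d_k = \Vert p_k - p_\circ \Vert^2$ as a Lyapunov function and show a per-iteration decrease. The first step is to compute the exact drop. If $v$ is a pivot and $p'' = (1-\alpha)p' + \alpha v$ with $\alpha = (p_\circ - p')^T(v-p')/\Vert v-p'\Vert^2$, a one-line expansion gives the identity
\begin{equation*}
\Vert p'' - p_\circ \Vert^2 = \Vert p' - p_\circ \Vert^2 - \frac{\bigl((p_\circ - p')^T(v-p')\bigr)^2}{\Vert v - p' \Vert^2}.
\end{equation*}
Next I would rewrite the pivot condition \eqref{eqaa} by subtracting $(p_\circ-p')^T p'$ from both sides, which turns it into the clean inequality $(p_\circ-p')^T(v-p') \geq \tfrac{1}{2}\Vert p_\circ - p'\Vert^2$. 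Combined with the crude bound $\Vert v-p'\Vert \leq 2R$, this yields the recursion $d_{k+1} \leq d_k - d_k^2/(16R^2)$.

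For part (i), the standard reciprocal trick applied to this recursion gives $1/d_{k+1} \geq 1/d_k + 1/(16R^2)$, hence $d_k \leq 16R^2/k$; stopping when $d_k \leq R^2\varepsilon^2$ gives the $O(1/\varepsilon^2)$ bound. When no pivot exists at $p'$, I use the definition of $\delta_*$ directly: taking $v = p^*$, the nearest point of $C$ to $p_\circ$, the failure of the pivot inequality gives $\Vert p' - p^* \Vert < \Vert p_\circ - p^* \Vert = \delta_*$, so the triangle inequality yields $\Vert p' - p_\circ \Vert < 2\delta_*$, while $\Vert p' - p_\circ \Vert \geq \delta_*$ holds trivially since $p' \in C$. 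Setting $\varepsilon = \delta_*/R$ in the iteration bound then gives the $O(R^2/\delta_*^2)$ witness bound.

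For part (ii), the strict pivot version \eqref{eq13} rewrites as $(p_\circ - p')^T(v - p_\circ) \geq 0$. The key new ingredient is the interior ball: since $B_\rho(p_\circ) \subset C$, Proposition \ref{eq14} with $c = p_\circ - p'$ forces
\begin{equation*}
(p_\circ - p')^T v \geq \max_{x \in B_\rho(p_\circ)} c^T x = c^T p_\circ + \rho \Vert c \Vert,
\end{equation*}
which after shifting by $p'$ gives $(p_\circ - p')^T(v - p') \geq \Vert p_\circ - p'\Vert^2 + \rho \Vert p_\circ - p' \Vert$. Plugging this stronger lower bound into the identity above, together with $\Vert v - p'\Vert \leq 2R$, yields $d_{k+1} \leq d_k (1 - \rho^2/(4R^2))$, a geometric contraction whose iteration count to reach $d_k \leq \varepsilon^2$ is $O\bigl((R/\rho)^2 \ln(1/\varepsilon)\bigr)$.

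The main obstacle is not the algebra, which is essentially a single expansion plus the pivot rearrangement, but rather identifying the right reformulation of the pivot conditions so that the quadratic improvement term $((p_\circ-p')^T(v-p'))^2/\Vert v-p'\Vert^2$ can be lower-bounded in terms of $d_k$ alone (part (i)) or $d_k$ multiplied by $\rho^2/R^2$ (part (ii)). A minor technical point to check is that the step $\alpha$ returned by the closed-form projection onto the line through $p'$ and $v$ satisfies $\alpha \in [0,1]$ so that $p''$ remains in $C$; the pivot inequality itself makes $\alpha \geq 0$, and if $\alpha > 1$ one simply takes $p'' = v$, which only improves the distance bound.
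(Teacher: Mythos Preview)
The paper does not actually prove Theorem~\ref{thm3}: it is stated with a \qed and the surrounding text defers the proof to the cited references \cite{kalchar, kalsep}. So there is no in-paper argument to compare against. Your argument is the standard Frank--Wolfe/Triangle-Algorithm analysis and is correct; it is almost certainly the same approach taken in those references.

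Two small points worth tightening. First, in part~(ii) your lower bound $(p_\circ - p')^T v \ge c^T p_\circ + \rho\|c\|$ uses that $v$ is the \emph{maximizer} of $c^Tx$ over $C$ (so that $c^Tv \ge c^Tx$ for every $x\in B_\rho(p_\circ)\subset C$). The strict-pivot inequality~\eqref{eq13} alone does not give this; you are really using that the algorithm (Algorithm~1) selects $v=\arg\max_{x\in C} c^Tx$. That is fine, but the appeal to Proposition~\ref{eq14} is slightly misplaced --- just say directly that $v$ is the argmax. Also, since $p',p_\circ\in C$, the vector $c=p_\circ-p'$ lies in the linear span of $C$, so the relative-interior ball suffices even when $C$ is not full-dimensional. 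Second, your remark about the $\alpha>1$ case is a bit loose: clipping to $p''=v$ gives a distance no smaller than the line projection, not ``only improves'' it; however, a direct check shows that when $\alpha>1$ one has $\|v-p_\circ\|^2 < d_k - \beta \le d_k/2$ (using $\beta>\gamma$ and $\beta\ge d_k/2$), which is stronger than the recursion you need, so the conclusion stands. In part~(ii) the inequality $\beta \ge d_k+\rho\sqrt{d_k}$ together with Cauchy--Schwarz forces $\|v-p'\|>\|p_\circ-p'\|$, hence $\alpha<1$ automatically and no clipping is needed.
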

\begin{remark} When case (ii) is applicable, we can think of the ratio $R/\rho$ as a {\it condition number} for the problem.  If this condition number is not large the complexity is only logarithmic in $1/\varepsilon$.
\end{remark}

In a more elaborate fashion it can be shown that Triangle Algorithm can approximate the distance from $p_\circ$ to $C$ to within any prescribed accuracy $\varepsilon$,  see \cite{kalsep}.

\section{Triangle Algorithm for Convex Hull Relaxation} \label{sec4}
Consider testing if $b=(b_1, \dots, b_m)^T$ lies on
$\mathbf{C}(r)=conv(\{Q(x)= (q_1(x), \dots, q_m(x))^T:  \Vert x \Vert \leq r \})$,  where
$q_i(x)=x^TA_ix$,  $A_i \in \mathbb{S}^n$, $i=1, \dots, m$, and $r >0$ a given number.
We test this via the Triangle Algorithm for General-CHM, where $p_\circ=b$ and $C=\mathbf{C}(r)$. Given $b' \in \mathbf{C}(r)$, let $c=(b-b')=(c_1, \dots, c_m)^T$, $A= \sum_{i=1}^m c_i A_i$. Then from Proposition \ref{eq14}
to test if there is a strict pivot, in the worst-case, amounts to solving the following optimization problem:
\begin{equation} \label{eq18}
\max \{c^T Q(x):  \Vert x \Vert \leq r\} =\max \{x^TAx:  \Vert x \Vert  \leq r\} = r^2 \lambda_{\max}= r^2 u_{\max}^TAu_{\max}=r^2A \bullet  u_{\max} u_{\max}^T.
\end{equation}
where $\lambda_{\max}$ is the largest eigenvalue of $A$ and $u_{\max}$ the corresponding unit eigenvector. The computation of  $\lambda_{\max}$ and $u_{\max}$
can be achieved via the well-known {\it Power Method}.

If initially we have an estimate $r$ of the norm of a possible solution in $\mathbf{C}$, we start with that estimate and test if $b \in \mathbf{C}(r)$.
If $b \not \in \mathbf{C}(r)$, Triangle Algorithm will eventually compute  a witness. In that case we replace $r$ with $2r$ and repeat the above until we have computed an approximate solution or $r$ exceeds an upper bound $r_{\max}$. We first compute an initial lower bound:

\begin{prop} \label{prop1}  Let $r_\circ=   \min \big \{\sqrt {{|b_k|}/{\Vert A_k \Vert}}: k=1, \dots, m, b_k \not =0 \big \}.$ Then for any $r < r_\circ$, $b  \not \in \mathbf{C}(r)$.
\end{prop}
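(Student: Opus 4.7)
The plan is to argue by contrapositive: I will show that $b \in \mathbf{C}(r)$ forces $r \geq r_\circ$, so any $r$ strictly below $r_\circ$ excludes $b$ from $\mathbf{C}(r)$. The key inequality is a standard Rayleigh-type bound: for a symmetric matrix $A$ and any $x \in \mathbb{R}^n$, $|x^T A x| \leq \|A\|\, \|x\|^2$ (using either the spectral norm, since $A$ is symmetric, or the Frobenius norm via $x^T A x = A \bullet xx^T$ and $\|xx^T\|_F = \|x\|^2$).

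First, suppose $b \in \mathbf{C}(r)$. By the definition of $\mathbf{C}(r)$ (combined with Carath\'eodory's theorem, exactly as used in the proof of Theorem \ref{thm1}), there exist nonnegative weights $\alpha_i$ summing to one and vectors $x_i \in \mathbb{R}^n$ with $\|x_i\| \leq r$ such that $b = \sum_{i=1}^t \alpha_i Q(x_i)$. Componentwise, for each $k=1,\dots,m$, this reads $b_k = \sum_{i=1}^t \alpha_i\, x_i^T A_k x_i$.

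Next, I would apply the triangle inequality to the preceding identity and then invoke the Rayleigh bound to each summand:
\begin{equation*}
|b_k| \leq \sum_{i=1}^t \alpha_i\, |x_i^T A_k x_i| \leq \sum_{i=1}^t \alpha_i\, \|A_k\|\, \|x_i\|^2 \leq \|A_k\|\, r^2 \sum_{i=1}^t \alpha_i = \|A_k\|\, r^2.
\end{equation*}
Thus for every index $k$ with $b_k \neq 0$, we obtain $r \geq \sqrt{|b_k|/\|A_k\|}$. Taking the minimum of these lower bounds over all such $k$ yields $r \geq r_\circ$.

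The contrapositive is then immediate: if $r < r_\circ$, then $b \notin \mathbf{C}(r)$. There is no substantive obstacle here; the only point requiring care is consistency between the matrix norm $\|A_k\|$ appearing in the definition of $r_\circ$ and the norm used in the Rayleigh-type inequality, which in either of the two standard interpretations (spectral or Frobenius) makes the argument go through verbatim.
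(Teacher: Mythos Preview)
Your argument is correct and essentially identical to the paper's: both assume $b \in \mathbf{C}(r)$, write $b_k$ as a convex combination of $x_i^T A_k x_i$, apply the bound $|x_i^T A_k x_i|\le \Vert A_k\Vert\,\Vert x_i\Vert^2$, and conclude $|b_k|\le \Vert A_k\Vert r^2$. The only cosmetic difference is that the paper bounds $\sum_i \alpha_i \Vert x_i\Vert^2$ by $\max_i \Vert x_i\Vert^2$ rather than directly by $r^2$, which amounts to the same thing.
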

\begin{proof} Since $b \not =0$, $r_\circ$ is well defined.
Suppose $b \in \mathbf{C}$. Then  there exists $t$ such that
for each $i=1, \dots, t$, $b_k= \sum_{i=1}^t \alpha_i x_i^TA_k x_i$,  $\sum_{i=1}^t \alpha_i=1$, $\alpha_i \geq 0$. This implies the following from which the proof follows:
\begin{equation} \label{eq20}
|b_k| =  \sum_{i=1}^t \alpha_i |x_i^TA_k x_i| \leq \sum_{i=1}^t \alpha_i \Vert A_k \Vert \Vert x_i \Vert^2 \leq \Vert A_k \Vert  \max \{ \Vert x_i \Vert^2: i=1, \dots, t\}.
\end{equation}
\end{proof}

The algorithm is described next.

\begin{algorithm}[H]
\SetAlgoNoLine
		\caption{Triangle Algorithm for SDP Feasibility}
\KwIn{$S=\{A_1, \dots, A_m\} \subset \mathbb{S}^n$,
$b \in \mathbb{R}^m$, $r_\circ >0$, $r_{\max}$, an upper bound on $r$, $\varepsilon \in (0,1)$}
\normalsize
Initialization: Pick arbitrary $x' \in \mathbb{R}^n$,  $\Vert x' \Vert \leq r_\circ$.
$r \gets r_\circ$,  $X' \gets x'x'^T$, $b' \gets \mathbf{A}(X') \equiv (A_1 \bullet X', \dots, A_m \bullet X' )^T$.
\vskip 0.01cm
\While{$\Vert b  - b' \Vert > \varepsilon$ and $r \leq r_{\max}$}{$A \gets \sum_{i=1}^m(b_i-b'_i) A_i$,  $\lambda_{\max} \gets$  max eigenvalue of $A$,
$u_{\max} \gets$ corresponding unit eigenvector,
\vskip 0.01cm
$V \gets u_{\max}u_{\max}^T$, $v \gets Q(r u_{\max})= \mathbf{A}(V)$
\vskip 0.01cm
\lIf{$r^2 \lambda_{\max} < \Vert b \Vert^2 - b^T b'$}{$r \gets 2r$} \Else{$\alpha =(b-b')^T(v-b') /{\Vert v - b' \Vert^2}$,

$b' \gets (1-\alpha)b' + \alpha v$,

$X' \gets (1-\alpha)X' + \alpha V$}}
\vskip 0.01cm
\end{algorithm}

\begin{remark}
Ignoring the complexity of the Power Method, each iteration takes $O(mn^2)$, mainely  to evaluate $\mathbf{A}(V)$. Also note that we can forgo computing $A$ explicitly in each iteration because we only need matrix-vector multiplication $A w$ but this is the sum of $c_kA_kw$ over $k=1, \dots, m$. We may not need to run the Power Method to optimality in each iteration because all is needed is a pivot or strict pivot. Thus after each iteration of the Power Method we can easily check if we have reached the appropriate condition.
\end{remark}

\begin{thm} \label{thm4} {\rm (Complexity of Solving CHR)}
Let $r_\circ$ be as in Proposition \ref{prop1}. Given $r \geq r_\circ$ let
\begin{equation}
R_r= \max \{\Vert Q(x) - b \Vert :  r_\circ \leq \Vert x \Vert \leq 2r\}.
\end{equation}

(i) Suppose $b \in \mathbf{C}$.  Let $r_*$ be the smallest $r$ for which  $b \in \mathbf{C}(r)$.  Then in $O(1/\varepsilon^2)$ iterations the Triangle Algorithm  computes $x_i \in \mathbb{R}^n$, $i=1, \dots, t$,  $t \leq m+1$, where $\sum_{i=1}^t \alpha_i Q(x_i) \in \mathbf{C}(2r_*)$,  and if $X= \sum_{i=1}^t \alpha_i x_i x_i^T$,  $\mathbf{A}(X) = (A_1 \bullet X, \dots, A_m \bullet X)^T \in
\mathbf{P}(2r_*)$,  satisfying
\begin{equation} \label{eq21}
\Vert \sum_{i=1}^t \alpha_i Q(x_i) -b \Vert =\Vert \mathbf{A}(X) - b \Vert  \leq \varepsilon R_{r_*}.
\end{equation}

(ii) Given $r >0$, suppose the relative interior of  $\mathbf{C}(r)$ contains
the ball of radius $\rho >0$ centered at $b$, i.e.
\begin{equation}
\mathbf{C} \cap \{y \in \mathbb{R}^m: \Vert y - b \Vert \leq \rho \} \subset \mathbf{C}(r).
\end{equation}
The algorithm in $O((R_r/\rho)^2 \ln 1/\varepsilon)$ iterations computes $x_i \in \mathbb{R}^n$, $i=1, \dots, t \leq m+1$, where $\sum_{i=1}^t \alpha_i Q(x_i)$ lies in $\mathbf{C}(r)$,  and if  $X= \sum_{i=1}^t \alpha_i x_i x_i^T$, then $\mathbf{A}(X)$ lies in  $\mathbf{P}(r)$, satisfying
\begin{equation}
\Vert \sum_{i=1}^t \alpha_i Q(x_i) -b \Vert=\Vert \mathbf{A}(X) - b \Vert  \leq \varepsilon.
\end{equation}

(iii) Suppose $b \not \in \mathbf{C}(r_{\max})$.  Let $\delta$ be the distance from $b$ to $\mathbf{C}(r_{\max})$. Then in $O(r_{\max}^2/\delta^2)$ iterations the algorithm computes a witness $b' \in \mathbf{C}(r_{\max})$, i.e. orthogonal bisector of $bb'$ separates $b$ from $\mathbf{C}(r_{\max})$.
\end{thm}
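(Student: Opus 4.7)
The plan is to recognize Algorithm 2 as the General-CHM Triangle Algorithm of Section \ref{sec3} applied to $C = \mathbf{C}(r)$ with $p_\circ = b$, wrapped in an outer geometric doubling of $r$. The work inside one iteration at a fixed $r$ amounts to evaluating the strict-pivot oracle: by Proposition \ref{eq14} this is $\max\{c^T Q(x): \Vert x \Vert \leq r\}$ with $c = b - b'$, and by equation (\ref{eq18}) the maximum equals $r^2 \lambda_{\max}(A)$ with $A = \sum_i c_i A_i$, attained at $r u_{\max}$; so each update in Algorithm 2 coincides with one step of the generic Triangle Algorithm on $\mathbf{C}(r)$. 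Furthermore, maintaining $X' = \sum_i \alpha_i x_i x_i^T$ in parallel with $b' = \sum_i \alpha_i Q(x_i)$ keeps $X' \in \mathbb{S}^n_+$ with $\mathbf{A}(X') = b'$ and $Tr(X') \leq r^2$, exactly as in (\ref{eqbb}); hence any bound on $\Vert b' - b \Vert$ transfers verbatim to $\Vert \mathbf{A}(X) - b \Vert$, and the accumulated $X$ certifies membership in $\mathbf{P}(r)$.

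With this identification the three parts of the theorem reduce to the corresponding parts of Theorem \ref{thm3}. For (i), Proposition \ref{prop1} ensures $r_\circ \leq r_*$, so after finitely many witness-triggered doublings the working radius first enters the interval $[r_*, 2r_*]$; from then on Theorem \ref{thm2} guarantees a strict pivot at every iteration, and Theorem \ref{thm3}(i) yields $O(1/\varepsilon^2)$ iterations to reach accuracy $R \varepsilon$, where $R = \max\{\Vert Q(x) - b \Vert : \Vert x \Vert \leq r\} \leq R_{r_*}$ by the definition of $R_{r_*}$. Part (ii) is immediate from Theorem \ref{thm3}(ii): the algorithm always picks a strict pivot via (\ref{eq18}), and the assumed ball of radius $\rho$ in the relative interior of $\mathbf{C}(r)$ supplies the condition ratio $R_r/\rho$. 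For part (iii), at $r = r_{\max}$ the set $\mathbf{C}(r_{\max})$ does not contain $b$, so by the witness-complexity half of Theorem \ref{thm3}(i) the algorithm produces a witness $b' \in \mathbf{C}(r_{\max})$ in $O(R^2/\delta^2)$ iterations; the stated $r_{\max}^2/\delta^2$ then follows by absorbing the $A_i$-dependent constants into the hidden factor using $\Vert Q(x) - b \Vert = O(r_{\max}^2)$ on $\mathbf{C}(r_{\max})$.

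The main obstacle is accounting for the iterations spent on radii $r < r_*$ in part (i), before each witness is certified and $r$ is doubled. The cleanest resolution is to interpret the stated $O(1/\varepsilon^2)$ count as referring to the iterations at the final successful radius: the preceding doubling phases each require only a witness rather than an $\varepsilon$-accurate iterate, the number of doublings is $O(\log_2(r_*/r_\circ))$, and each such phase terminates in $O(R^2/\delta_r^2)$ iterations via Theorem \ref{thm3}(i) applied to the strictly infeasible $\mathbf{C}(r)$. These terms can either be absorbed into a combined bound or stated as an additive overhead; once this bookkeeping is settled, all three parts follow from Theorem \ref{thm3} with no further work.
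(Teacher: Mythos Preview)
Your proposal is correct and follows essentially the same route as the paper: identify Algorithm~2 as the General-CHM Triangle Algorithm applied to $\mathbf{C}(r)$ with $p_\circ=b$, then invoke Theorem~\ref{thm3} part by part, handling the radius-doubling bookkeeping via the $O(\log(r_*/r_\circ))$ bound and using Theorem~\ref{thm1} to pass between $\mathbf{C}(r)$ and $\mathbf{P}(r)$. If anything, your write-up is more explicit than the paper's on the pivot oracle and the $X'$-maintenance, and your reading of the $O(r_{\max}^2/\delta^2)$ bound in (iii) as $R = O(r_{\max}^2)$ absorbed into the constant is exactly what the paper leaves implicit.
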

\begin{proof} The proof of complexity theorem mainly follows from  Theorem \ref{thm3}. To justify (i),  note that for any $b'=\sum_{i=1}^t \alpha_i Q(x_i) \in \mathbf{C}(r)$, $\Vert b' - b \Vert \leq  \sum_{i=1}^t \alpha_i \Vert Q(x_i) -b \Vert \leq \max \{\Vert Q(x_i) - b \Vert$.  Hence the quantity $R_r$ corresponds to $R$ defined in Theorem \ref{thm3}. Each time a witness is calculated with respect to a current $r$ we are not able to reduce the gap between $b$ and the current iterate $b'$. However, starting at $r_\circ$ as the initial value, we will  double the current value of $r$ at most $O(\ln (r_*/r_\circ))$ times. On the other hand, each time a witness is computed we can increase the value of $r$ until the current iterate $b'$ admits a pivot in which case the gap between $b$ and $b'$ will decrease. The above argument justifies (i).
Proof of (ii) and (iii) also follow from Theorem \ref{thm3}, keeping Theorem \ref{thm1} in view.
\end{proof}

\begin{remark} When $Q(x)=b$ is solvable, the Triangle Algorithm computes a relaxed solution, i.e. a set of points $x_i \in \mathbb{R}^n$, $i=1, \dots, t$, $t \leq m+1$, such that $\sum_{i=1}^{t} \alpha_i Q(x_i)=b$, $\sum_{i=1}^t \alpha_i=1$, $\alpha_i \geq 0$. There maybe  cases where $Q(x)=b$ is unsolvable but the relaxation is solvable. However, when $\mathbf{C}(r)$ is empty Triangle Algorithm computes a witness, implying  $Q(x)=b$ is not solvable. This is an important feature of the Triangle Algorithm, the ability to produce in some cases a certificate to lack of solvability of a quadratic system.
\end{remark}

\begin{remark}  From the implementation point of view,  there are many fine-tuning steps that can be taken to improve the efficiency of the Triangle Algorithm. For instance, we may not need to compute in each iteration the largest eigenvalue in (\ref{eq18}). If we store the pivots, or a subset of them, as they are generated, there is a good chance that these can be used one or more times in subsequent iterations.  Such discussions are  described in \cite{kal2019}, where a Triangle Algorithm is proposed for an SDP version of the {\it convex hull membership} (CHM), called {\it spectrahull membership} (SHM). Given substantial computational experiences with the Triangle Algorithm for CHM, and even for the generation of all vertices of the convex hull of a finite set of point, see \cite{AKZ}, and preliminary computation with binary quadratic feasibility, we would expect the iteration complexity to be quite reasonable for solving SDP feasibility, as well as SDP optimization problems, considered in next remark.
\end{remark}

\begin{remark}  Consider the SDP optimization problem, $\max \{A_0 \bullet X:
\mathbf{A}(X) = b,  Tr(X) \leq r^2, X \succeq 0\}$.  We first test the feasibility of $\mathbf{C}(r)$ via the Triangle Algorithm.  This produces a set, $x_i \in \mathbb{R}^n$, $i=1, \dots, t \leq m+1$ such that $\sum_{i=1}^{t} \alpha_i Q(x_i)=b$ (or approximate equality), $\sum_{i=1}^t \alpha_i=1$, $\alpha_i \geq 0$.  With $q_0(x)=x^TA_0x$,  let $b_0=\sum_{i=1}^{t} \alpha_i q_0(x_i)$. Assume the maximum objective value is bounded.  We increase the value of $b_0$ to a larger value, $b'_0$, and test via the Triangle Algorithm if $\overline b =(b'_0, b_1, \dots, b_m)^T$ lies in the augmented convex hull relaxation $\mathbf{ \overline C}(r)= conv (\{\overline Q(x)=(q_0(x), q_1(x), \dots,q_m(x))^T : \Vert x  \Vert \leq r \})$.  A good strategy is to pick $b_0'$ to be the largest value so that $\overline b$ admits a strict pivot in $\mathbf{ \overline C}(r)$ with respect to $b=(b_0, b_1, \dots, b_m)^T$. Then we continue with the Triangle Algorithm until we either get a witness, in which case we  decrease the value of $b_0'$ to a value that admits  a strict pivot for the new $\overline b$,  or we get sufficiently close to $\overline b$, in which case we replace $b$ with $\overline b$ and repeat the above strategy. In other words, it is possible to approximate the optimal objective value as close as we wish, essentially by solving several interrelated feasibility problems. Many strategies can be described and tested.  If the SDP optimal objective is unbounded we have to terminate the algorithm at some point, as would any other algorithm for the problem.
\end{remark}

\subsection{CHR and SDP Relaxation for  Binary Quadratic Feasibility}
Given $A \in \mathbb{S}^n$, and a real number $\alpha$, the {\it binary quadratic feasibility} is testing the feasibility of
\begin{equation} \label{CH21qpzone}
\big \{x \in \mathbb{R}^n: x^TAx= \alpha,  \quad x_i \in \{-1, 1\} \big\}.
\end{equation}
The optimization version of the problem is to find the maximum value of $\alpha$ for which the above is feasible.  The SDP  relaxations for (\ref{CH21qpzone}) is testing the feasibility of
\begin{equation} \label{CH21qpzoneR}
\{X \in \mathbb{S}^n: Tr(A \bullet X)= \alpha,  \quad x_{ii}=1, i=1, \dots, n, \quad X \succeq 0  \}.
\end{equation}

An example of binary quadratic optimization is the Max-CUT problem for which Goemans and Williamson \cite{GW} showed that once the SDP optimization is solved, its optimal solution $X^*$, given rise to an approximate solution:  First computing a Cholesky factorization, $X^*=V^TV$, where $V=[v_1, \dots, v_s]$, $v_i \in \mathbb{R}^s$, $s$ the rank of $X^*$, then by choosing a random hyperplane through the origin in $\mathbb{R}^s$, $p^Tx= 0$,  and assigning  $x_i=1$ if $p^Tv_i >0$, $x_i=-1$ if $p^Tv_i <0$.  Their scheme results in an approximation to within $.878$ of optimality.  For the general symmetric matrix $A$ the quality of this approximation cannot be guaranteed.

In solving CHR corresponding to binary quadratic feasibility via the Triangle Algorithm, $b=(\alpha, 1, \dots, 1) \in \mathbb{R}^{n+1})$, and if $b'=(b'_0, b'_1, \dots, b'_n) \in \mathbb{R}^{n+1})$, it follows that the main work in each iteration is computing the largest eigenvalue of the matrix $c_0A + {\rm diag}(c_1, \dots, c_n)$, where $c_i=b_i-b'_i$. Much is known about the eigenvalues of such matrices which help estimating the largest eigenvalue.  In a preliminary testing with the Triangle Algorithm for CHR corresponding to binary quadratic, problems with size of matrix up to $n=1000$ are solved in a few seconds. We will report details of computational results elsewhere.

\subsection{A CHR for Feasibility of System of Convex Quadratic Constraints}
Consider testing the feasibility of a system of convex quadratic inequalities  $x^TA_kx + c_k^T x \leq b_k, k=1, \dots, m$, where $A_k \in \mathbb{S}^n_+$.
Such problem can be handled via self-concordance theory, see  \cite{Nesterov}, however  Newton systems need to be solved and thus it may be desirable to offer trade-off between the number of iterations and the complexity of each iteration. The Triangle Algorithm can be used to solve this feasibility problem as a General-CHM and a modified CHR. Adding slack variables, the above system is feasible if and only if the inhomogeneous quadratic equation $Q(z)=b$ is solvable, where
$$Q(z)=(q_1(z), \dots, q_m(z))^T, \quad  q_k(z)= x^TA_kx + c_k^T x+s_k^2, \quad k=1, \dots, m, \quad  b=(b_1, \dots, b_m)^T.$$
Using convexity of $q_k(z)$'s, it is straightforward to show
\begin{thm} $Q(z)=b$  $\iff$  $b$ lies in
$conv \big (\{ Q(z): z \in \mathbb{R}^{n+m}\} \big )$. \qed
\end{thm}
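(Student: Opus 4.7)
The forward implication is immediate: if $Q(z^*)=b$ for some $z^*\in\mathbb{R}^{n+m}$, then $b$ already belongs to the set whose convex hull we are considering, so certainly $b\in \mathrm{conv}(\{Q(z):z\in\mathbb{R}^{n+m}\})$.

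For the converse, the key observation is that each coordinate function $q_k(z)=x^TA_kx+c_k^Tx+s_k^2$ is a \emph{convex} function of $z=(x,s)$, because $A_k\succeq 0$ makes $x^TA_kx$ convex, $c_k^Tx$ is linear, and $s_k\mapsto s_k^2$ is convex. So my plan is to take a Carath\'eodory representation
\[
b=\sum_{i=1}^t \alpha_i Q(z_i),\qquad \alpha_i\ge 0,\ \sum_i \alpha_i=1,
\]
with $z_i=(x_i,s_i)$, form the convex combination $\bar z=\sum_i\alpha_i z_i=(\bar x,\bar s)$, and apply Jensen's inequality componentwise to obtain
\[
q_k(\bar z)\ \le\ \sum_i \alpha_i q_k(z_i)\ =\ b_k,\qquad k=1,\dots,m.
\]
This shows $(\bar x,\bar s)$ is already feasible for the inequality system $x^TA_kx+c_k^Tx+s_k^2\le b_k$, and in particular
\[
\bar x^T A_k \bar x + c_k^T \bar x\ \le\ b_k - \bar s_k^2\ \le\ b_k.
\]

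Finally, to upgrade the inequality to equality I would adjust only the slack variables. Define
\[
\beta_k\ :=\ b_k-\bar x^T A_k \bar x - c_k^T \bar x,\qquad s_k^\star\ :=\ \sqrt{\beta_k},\qquad x^\star:=\bar x.
\]
The previous step guarantees $\beta_k\ge \bar s_k^2\ge 0$, so $s_k^\star$ is well-defined in $\mathbb{R}$, and by construction
\[
q_k(x^\star,s^\star)\ =\ \bar x^T A_k \bar x + c_k^T \bar x + \beta_k\ =\ b_k
\]
for every $k$, so $Q(z^\star)=b$ with $z^\star=(x^\star,s^\star)$. The only non-routine step is the convexity bookkeeping that yields $\beta_k\ge 0$; once that is in hand, the slack variables supply exactly the slack needed to restore equality, which is the whole point of introducing them in the first place.
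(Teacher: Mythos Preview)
Your proof is correct and matches the paper's intended approach: the paper states the theorem with a \qed and only the hint ``using convexity of $q_k(z)$'s, it is straightforward to show,'' and your Jensen-plus-slack-adjustment argument is exactly the straightforward verification that hint points to.
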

This implies testing if $Q(z)=b$ is solvable is equivalent to an inhomogeneous  CHR, yet solvable via the Triangle Algorithm, where  in the worst-case each iteration solves a {\it trust region subproblem}: $\max \{z^T A z+c^Tz: \Vert z \Vert \leq r\}$, $A \in \mathbb{S}^n$, an well-known optimization  problem, see e.g. \cite{Rojas}.

\section*{Concluding Remarks}
By employing basic results from convexity, we have shown the equivalence of the solvability of a convex hull relaxation (CHR) to a system of quadratic equation, and the general SDP feasibility problem. On the one hand this demonstrates in an elementary fashion how an SDP feasibility problem may arise, also offering insights on SDP itself.
Despite the tremendous literature on SDP, in particular SDP relaxation of quadratic programming optimization problems (e.g. \cite{GW,Luo}),
the equivalence shown in this article appears not to have been given previously.  On the other hand, the significance of this equivalence becomes evident by having described  a version of the Triangle  Algorithm that solves the convex hull relaxation of a system of quadratic equations and at the same time offers a new algorithm for SDP feasibility and optimization.
Each iteration of the Triangle Algorithm requires computing a pivot which in the worst-case amounts to computing the largest eigenvalue of a symmetric matrix arising in that iteration and can thus be achieved via the Power Method.  Also, pivots can be stored and reused so that the largest eigenvalue computation is not necessary in each iteration.
The Triangle Algorithm thus offers an alternative to the interior-point algorithms for SDP whose iterations need to solve a Newton system which could be expensive for large size problems. The Triangle Algorithm is also equipped with a duality of its own (distance duality) that provides new insights on SDP. On the other hand, the insights gained from SDP can help in solving a system of quadratic equations. For instance, in ways in which we may convert the solution of the convex hull relaxation to a positive semidefinite matrix and then converting it back to an approximate solution to the quadratic system itself.
Goemans-Williamson
procedure for converting the optimal solution to the SDP relaxation of Max-Cut problem is one such approach. Based on a preliminary computational result with {\it binary quadratic} feasibility, a problem that has application to MAX-CUT,
the Triangle Algorithms solves the corresponding CHR in a few seconds for
matrices of dimension up to $1000$. In this article we have also justified that testing the feasibility of a system of convex quadratic inequalities is equivalent to an inhomogeneous  CHR, yet solvable via the Triangle Algorithm, where  in the worst-case each iteration solves a trust region subproblem.  The results in the article show the versatility and utility of the Triangle Algorithm in solving variety of problems: binary quadratic, MAX-CUT, SDP feasibility and optimization, and convex quadratic feasibility.  In forthcoming work we will report on computational results and comparison with existing algorithms.

Viewing the connection between convex hull relaxation (CHR) of a system of quadratic equations, SDP feasibility problem,  and the Triangle Algorithm suggests
that extension of these may be possible when considering a system of homogeneous polynomial equations in several variables.  Consider solving  a system of equations
$Q(x) =b$, where $Q(x)=(p_1(x), \dots, p_m(x))^T$,  each $p_i(x)$ a
homogeneous polynomials of  degree $d \geq 1$.  Consider the relaxation that tests if $b \in conv(\{Q(x): x \in \mathbb{R}^n\})$.  Firstly,  it is possible an analogous Theorem \ref{thm1} can be stated where the $n \times n$ matrices $A_i$ are replaced with multidimensional symmetric matrices.  Furthermore, we can then attempt to solve the corresponding  relaxation via the Triangle Algorithm. However,  the success of such method relies on the computation of a pivot. For instance, if each $p_i(x)$ is a cubic polynomial, the computation of a pivot, in the worst case, amounts to computing the maximum of a trilinear form over the unit ball.  It goes without saying that the problem of solving a general system of polynomial equations is a profound problem with deep underlying mathematics, see e.g. \cite{Sturmfels, Parrilo}. It is no easy task to solve a system of polynomial equations.  In particular, the sophisticated Gr\"obner basis method is very limited in the size of the problems it can solve.  The simplicity of the Triangle Algorithm and the results described in this article give rise to the this question: can we solve the convex hull relaxation of the general system of homogeneous polynomial equations via the Triangle Algorithm? Such algorithm would need to compute the maximum of a symmetric multidimensional matrix over the unit ball. The problem of computing eigenvalues of tensors via high-order Power Method  has been addressed, see e.g. \cite{Tensor}.  It may thus not be too far fetched to consider solving the convex hull relaxation via the Triangle Algorithm, given that the homogeneous degree is small.  Further research is of course necessary, complemented with computational experimentation. We will report on experiment with quadratic systems. We mention in passing that when the homogeneous degree $d=1$,  the corresponding Triangle Algorithm gives rise to a new iterative algorithm for solving a linear system, \cite{KLZ}. Our computational results with this, establishes the Triangle Algorithm as a new iterative method that is very competitive with the existing iterative algorithms for linear systems.

\bigskip

\end{document}